\numberwithin{equation}{section}
\numberwithin{figure}{section}
\newtheorem{theorem}{Theorem}[section]
\newtheorem*{theorem*}{Theorem}
\newtheorem*{proposition*}{Proposition}
\newtheorem{corollary}[theorem]{Corollary}
\newtheorem{lemma}[theorem]{Lemma}
\newtheorem*{lemma*}{Lemma}
\newtheorem{lemma-definition}[theorem]{Lemma-Definition}
\newtheorem{conjecture}[theorem]{Conjecture}
\newtheorem*{conjecture*}{Conjecture}
\theoremstyle{definition}
\newtheorem{definition}[theorem]{Definition}
\newtheorem{remark}[theorem]{Remark}
\newcommand{\colim@}[2]{%
  \vtop{\m@th\ialign{##\cr
    \hfil$#1\operator@font colim$\hfil\cr
    \noalign{\nointerlineskip\kern1.5\ex@}#2\cr
    \noalign{\nointerlineskip\kern-\ex@}\cr}}%
}
\newcommand{\limit}{%
  \mathop{\mathpalette\varlim@{\leftarrowfill@\scriptscriptstyle}}\nmlimits@
}
\newcommand{\colimit}{%
  \mathop{\mathpalette\varlim@{\rightarrowfill@\scriptscriptstyle}}\nmlimits@
}
\newcommand{\C}{\mathbf C}
\renewcommand{\P}{{\mathbf P}}
\newcommand{\Q}{{\mathbf Q}}
\newcommand{\Z}{{\mathbf Z}}
\newcommand\sca{\mathscr A}
\newcommand\scb{\mathscr B}
\newcommand\sco{\mathscr O}
\newcommand{\op}{\operatorname}
\newcommand{\PGL}{\op{PGL}}
\newcommand{\ev}{\operatorname{ev}}
\DeclareMathOperator{\Ann}{Ann}
\DeclareMathOperator{\rk}{rk}
\DeclareMathOperator{\NS}{NS}
\DeclareMathOperator{\ind}{ind}
\DeclareMathOperator{\per}{per}
\DeclareMathOperator{\Br}{Br}
\DeclareMathOperator{\Coh}{Coh}
\newcommand{\varmod}{\mathbin{\mathrm{mod}}}
\DeclareMathOperator{\Hdg}{Hdg}
\newcommand{\ch}{\mathrm{ch}}
\def\sHom{\mathop{\mathscr{H}\!\mathit{o}\! \kern .4pt \mathit{m}}\nolimits}
\def\sEnd{\mathop{\mathscr{E}\!\mathit{nd}}\nolimits}
\def\sAut{\mathop{\mathscr{A}\!\mathit{ut}}\nolimits}
\def\sInn{\mathop{\mathscr{I}\! \kern .8pt \mathit{nn}}\nolimits}
\newcommand{\an}{\mathrm{an}}
\newcommand{\Ho}{\mathrm{H}}
\renewcommand{\subset}{\subseteq}
\newcommand{\nc}{\newcommand}
\nc{\p}[2]{\frac{\partial #1}{\partial #2}}
\nc{\pp}[2]{\frac{\partial^2 {#1}} {\partial {#2} ^2}}
\nc{\pmix}[3]{\frac{\partial^2 {#1}}{\partial {#2}\, \partial {#3}}}
\newcommand{\bpm}{\begin{pmatrix}}
\newcommand{\epm}{\end{pmatrix}}
\newcommand{\bbm}{\begin{bmatrix}}
\newcommand{\ebm}{\end{bmatrix}}
	\def\sAut{\mathop{\mathscr{A}\! \kern .8pt \mathit{ut}}\nolimits}
	\def\sIso{\mathop{\mathscr{I}\! \kern .9pt \mathit{so}}\nolimits}
	\def\sGrpd{\mathop{\mathscr{G}\! \kern 1.6pt \mathrm{rpd}}\nolimits}
	\def\sGrp{\mathop{\mathscr{G}\! \kern 1.6pt \mathrm{rp}}\nolimits}
	\def\sPr{\mathop{\mathscr{P}\! \kern 1.6pt \mathrm{r}}\nolimits}
	\def\sPic{\mathop{\mathscr{P}\! \kern 1.6pt \mathrm{ic}}\nolimits}
	\def\sM{\mathop{\mathit{s}\! \kern 1.6pt \mathscr{M}}\nolimits}
	\def\sCat{\mathop{\mathscr{C}\! \kern .9pt \mathit{at}}\nolimits}
	\def\sHH{\mathop{\mathscr{H}\! \kern .9pt \mathscr{H}}\nolimits}
	\newcommand{\ktop}{\mathrm{K}^{\mathrm{top}}}
	\newcommand{\tors}{\mathrm{tors}}
	\newcommand{\wdots}{\kern 1.8pt \cdots \kern 1.8pt}
	\newcommand{\topo}{\mathrm{top}}
	\newcommand{\tw}{\mathrm{tw}}
	\setlist{topsep=4pt plus 2pt minus 5pt, itemsep=8pt plus 2pt minus 5pt}
\begin{document}

	\title{The period-index problem for complex tori}
	\author{James Hotchkiss}
	\address{Department of Mathematics, University of Michigan, Ann Arbor, MI 48109 \smallskip}
	\email{htchkss@umich.edu}

	\begin{abstract}
	We solve the period-index problem for the Brauer group of a general complex torus of dimension at least three, giving an explicit formula for the index of each Brauer class. As a consequence, the complex-analytic version of the period-index conjecture is false for infinitely many Brauer classes on a general complex torus of dimension at least three.
	\end{abstract}

	\maketitle

	\section{Introduction}
	\label{sec:intro}

	Let $X$ be a connected complex manifold. An \emph{Azumaya algebra} $\sca$ over $X$ is a sheaf of $\sco_X$-algebras which is locally isomorphic to a matrix $\sco_X$-algebra $\mathrm{M}_{n}(\sco_X)$. Since the rank of an Azumaya algebra $\sca$ is a square, one defines the \emph{degree} of $\sca$ to be $\sqrt{\rk \sca}$. Two Azumaya algebras $\sca$ and $\scb$ are \emph{Morita equivalent} if there exist vector bundles $E$ and $F$, and an isomorphism of $\sco_X$-algebras
	\[
		\sca \otimes \sEnd(E) \simeq \scb \otimes \sEnd(F).
	\]
	The Brauer group $\Br(X)$ is the group of Morita-equivalence classes of Azumaya algebras with tensor product. 

	Given a Brauer class $\alpha$, the \emph{period} $\per(\alpha)$ of $\alpha$ is its order in $\Br(X)$, which is a torsion group. The \emph{index} $\ind(\alpha)$ of $\alpha$ is the greatest common divisor of the set of degrees of Azumaya algebras of class $\alpha$. It follows from a general result of Antieau and Williams that $\per(\alpha)$ divides $\ind(\alpha)$, and that they share prime factors \cite{topos}. 

	The \emph{period-index problem} is the problem of determining an integer $\epsilon$ so that $\ind(\alpha)$ divides $\per(\alpha)^{\epsilon}$. If $X$ is projective, then $\Br(X)$ is a subgroup of $\Br(\C(X))$, with $\C(X)$ the function field of $X$, and the longstanding \emph{period-index conjecture} about Brauer groups of function fields (cf. \cite{ct_bourbaki}, \cite{lieblich_twisted_sheaves}) specializes to the following statement:
	\begin{conjecture}[Global period-index conjecture]
	\label{conj:globalpic}
		Let $X$ be a smooth, connected, projective variety over $\C$. For any $\alpha \in \Br(X)$,
		\[
			\ind(\alpha) \divides \per(\alpha)^{\dim X- 1}.
		\]
	\end{conjecture}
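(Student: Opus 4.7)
The plan is to attack the conjecture via the theory of $\alpha$-twisted sheaves and induction on $\dim X$, in the spirit of de Jong's proof for surfaces (as reworked by Lieblich). Since $\per(\alpha)$ and $\ind(\alpha)$ share prime factors by the Antieau--Williams theorem recalled above, one may reduce to the primary case $\per(\alpha) = \ell^{m}$ for a prime $\ell$. The task then becomes: exhibit, for each such $\alpha$, a nonzero $\alpha$-twisted coherent sheaf on $X$ whose rank divides $\ell^{m(\dim X - 1)}$. Because the index of $\alpha$ divides the rank of any nonzero $\alpha$-twisted sheaf, this would give $\ind(\alpha) \mid \per(\alpha)^{\dim X - 1}$.

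To construct such a twisted sheaf, I would realize $\alpha$ as a $\bmu_{n}$-gerbe $\scx \to X$ with $n = \per(\alpha)$ and work with weight-one coherent sheaves on $\scx$, proceeding by induction on $\dim X$. The base case $\dim X = 1$ is Tsen's theorem, which forces $\Br(X) = 0$ for smooth projective curves over $\C$; the case $\dim X = 2$ is de Jong's period-index theorem. For the inductive step in $\dim X \geq 3$, fix a sufficiently positive smooth divisor $\iota \colon H \hookrightarrow X$: the inductive hypothesis applied to $\alpha|_{H} \in \Br(H)$ produces an $\alpha|_{H}$-twisted coherent sheaf $E_{H}$ of appropriate rank, and the remaining task is to deform $E_{H}$ through infinitesimal neighborhoods of $H$ in $X$ and then algebraize via Grothendieck's existence theorem to obtain an $\alpha$-twisted coherent sheaf on $X$ of the same generic rank.

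The main obstacle is the deformation step. The obstruction to lifting $E_{H}$ to the first-order neighborhood of $H$ in $X$ lies in $\Ext^{2}_{H}(E_{H}, E_{H} \otimes N_{H/X}^{\vee})$, with analogous higher-order obstructions, and these classes need not vanish. Controlling them would require both a delicate choice of $E_{H}$ (imposing stability, rigidity, or tight Chern-class constraints) and a careful choice of $H$ (using very positive linear systems to force Lefschetz-type vanishing). Alternatively, one might bypass the lifting altogether by constructing twisted sheaves on $X$ directly from moduli-theoretic existence results for $\alpha$-twisted coherent sheaves with prescribed numerical invariants, as Lieblich does in low dimensions; this recasts the obstruction as the problem of certifying non-emptiness of a twisted Quot scheme of sheaves of small rank. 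It is precisely the tension between the cohomological rigidity demanded of the twisted sheaf and the topological constraints imposed by the ambient $X$ that has kept the conjecture open beyond $\dim X = 2$, and, in view of the results advertised in the abstract, the algebraization step should break down in the complex-analytic setting.
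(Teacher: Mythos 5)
The statement you were asked about is Conjecture~\ref{conj:globalpic}, which is an \emph{open conjecture}: the paper does not prove it, and offers no proof to compare against. It is stated only as motivation; the paper's actual contribution is to show that the analogous statement \emph{fails} for general non-projective complex tori of dimension at least three (Corollary~\ref{cor:failure}), while the projective cases $\dim X \leq 2$ are attributed to Tsen and to de Jong. So there is no ``paper's own proof'' here, and your text should not be presented as a proof of anything.

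To your credit, your proposal correctly identifies the standard line of attack (reduction to the $\ell$-primary case via the period-index prime-factor theorem, realization of $\alpha$ by a gerbe or Severi--Brauer variety, induction on dimension via restriction to a hyperplane section, and deformation of twisted sheaves through infinitesimal neighborhoods followed by algebraization) and, more importantly, correctly identifies where it breaks: the obstruction classes in $\Ext^{2}_{H}(E_{H}, E_{H}\otimes N_{H/X}^{\vee})$ and their higher-order analogues are not known to vanish, and no choice of $E_{H}$ or $H$ is currently known to kill them in dimension $\geq 3$. That honest admission means your write-up is a research program, not a proof; every substantive step beyond the reductions in the first paragraph is conjectural. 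The concrete gap is the entire deformation-and-algebraization step, and it is the same gap that has kept the conjecture open. Note also that the paper's negative result for complex tori shows that any successful argument must use projectivity (or at least algebraicity) in an essential way --- for instance, the existence of ample divisors and the resolution property --- since the purely Hodge-theoretic and topological inputs available in the compact K\"ahler setting are insufficient, as Theorem~\ref{thm:key} demonstrates.
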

	The conjecture is trivial for $\dim X = 0$, and holds for $\dim X = 1$ by Tsen's theorem. The case $\dim X = 2$ is due to de Jong \cite{dejong_period_index}. According to the discriminant-avoidance theorem of de Jong and Starr \cite{dejong_starr}, Conjecture~\ref{conj:globalpic} for all complex projective varieties of dimension $\leq d$ is equivalent to the period-index conjecture for complex function fields of transcendence degree $\leq d$. Notably, the analogue of Conjecture~\ref{conj:globalpic} for topological Azumaya algebras on finite CW complexes has been introduced and studied by Antieau and Williams \cite{twisted_top}, \cite{six_complex}, \cite{top_per_ind}.

	Little is known about the period-index problem for Brauer groups of non-algebraic complex manifolds beyond the case of surfaces. When $X$ is an analytic K3 surface, an analysis of the argument given in \cite{huy_schroer} shows that period and index coincide, in accordance with de Jong's theorem. Moreover, the same argument applies to the case when $X$ is a $2$-dimensional complex torus. When $X$ is a Stein manifold, the period-index problem is equivalent to the topological period-index problem by the Grauert--Oka principle, and one may obtain bounds from the work of Antieau and Williams \cite{top_per_ind} (Remark~\ref{rem:oka}).

	If $X$ is a complex torus of dimension $g$ with $\alpha \in \Br(X)$, then there is a na\"ive upper bound on $\ind(\alpha)$ given by the \emph{annihilator} $\Ann(\alpha)$, which is the least degree of a finite isogeny $f:X' \to X$ such that $f^* \alpha = 0$. When $\NS(X) = 0$, one may compute $\Ann(\alpha)$ explicitly (Lemma~\ref{lem:annihilator_calculation}), and as $\alpha$ ranges over $\Br(X)[n]$, $\Ann(\alpha)$ attains any positive integer value which divides $n^g$. Our main result is that for $X$ general, $\Ann(\alpha)$ is also a lower bound for $\ind(\alpha)$:

	\begin{theorem}
	\label{thm:main}
	    Let $X$ be a general complex torus of dimension $g \geq 3$, and let $\alpha \in \Br(X)$. Then 
	    \[
	    	\ind(\alpha) = \Ann(\alpha)
	    \]
	\end{theorem}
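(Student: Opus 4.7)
The inequality $\ind(\alpha) \divides \Ann(\alpha)$ is the standard direction. Given a finite isogeny $f \colon X' \to X$ of degree $\Ann(\alpha)$ with $f^*\alpha = 0$, the pushforward $f_* \sco_{X'}$, interpreted via a chosen trivialization of $f^*\alpha$, is an $\alpha$-twisted coherent sheaf on $X$ of generic rank $\Ann(\alpha)$; its endomorphism sheaf is then an Azumaya algebra of degree $\Ann(\alpha)$ and class $\alpha$.

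The essential content of the theorem is the reverse divisibility $\Ann(\alpha) \divides \ind(\alpha)$. My plan is to translate this into a statement about projective representations of $\pi_1(X) = H_1(X, \Z) \simeq \Z^{2g}$. Set $n = \per(\alpha)$. The hypothesis $\NS(X) = 0$, combined with the Kummer sequence and the divisibility of $\Pic^0(X)$, gives $\Br(X)[n] \simeq H^2(X, \Z/n) \simeq \Lambda^2 \Hom(H_1(X, \Z), \Z/n)$, so $\alpha$ corresponds to an antisymmetric $(\Z/n)$-bilinear form $\bar\alpha$ on $H_1(X, \Z)$. By Lemma~\ref{lem:annihilator_calculation}, $\Ann(\alpha)$ is exactly the minimum index $[H_1(X, \Z) : \Lambda']$ of a sublattice on which $\bar\alpha$ vanishes. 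On the representation-theoretic side, an elementary Heisenberg-group computation with the twisted group algebra $\C[H_1(X, \Z)]^{\bar\alpha}$ shows that the minimum dimension of a projective representation of $H_1(X, \Z)$ with cocycle $\bar\alpha$ is exactly $\Ann(\alpha)$.

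To exploit this, I would attach to any $\alpha$-twisted analytic vector bundle $V$ of rank $r$ a projective holomorphic representation of $\pi_1(X)$ of dimension $r$ with cocycle $\bar\alpha$, extracted from the monodromy of $V$ lifted to the universal cover $\C^g$. Matching ranks on the two sides would then give $\Ann(\alpha) \divides r$ and close the argument. The main obstacle is that, a priori, the lift of $V$ to $\C^g$ is merely a holomorphic vector bundle carrying a projective $\pi_1(X)$-action, not a trivial bundle; producing a genuine finite-dimensional projective representation requires showing that this pullback is holomorphically trivial, or at least admits a flat holomorphic connection compatible with the $\pi_1$-action. Bridging this gap is where the genericity of $X$ must enter essentially. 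I would pursue this via Mukai's theory of semihomogeneous vector bundles on complex tori: on a general torus with $\NS(X) = 0$, the small supply of holomorphic line bundles (only $\Pic^0(X)$) and the rigidity imposed by $\End(X) = \Z$ should force every $\alpha$-twisted vector bundle to be semihomogeneous, and hence to descend from a Heisenberg-type trivialization on a suitable isogeny cover, which is precisely what supplies the projective representation above.
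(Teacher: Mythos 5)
Your easy direction is fine, and your group-theoretic endgame is sound: for a torus with $\NS(X)=0$ the class $\alpha$ does correspond to an alternating form $\bar\alpha$ on $H_1(X,\Z)$ with values in $\Z/n$, and the Stone--von Neumann/Heisenberg computation does show that every finite-dimensional projective representation of $\Z^{2g}$ with cocycle $\bar\alpha$ has dimension divisible by $\prod_i \Ann_{\Z}(a_i) = \Ann(\alpha)$ (this plays the same role as Lemma~\ref{lem:rational_b_field_annihilator} in the paper, which instead extracts the divisibility from the integrality of $N\cdot\exp(-B)$ for a rational $B$-field). The problem is that the entire analytic content of the theorem sits in the step you flag as ``the main obstacle'' and then do not carry out: producing, from an arbitrary $\alpha$-twisted holomorphic vector bundle of rank $r$, a genuine $r$-dimensional projective representation of $\pi_1(X)$. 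This is exactly the assertion that the bundle is projectively flat, and it is false for a general holomorphic bundle on a general torus without further input; the lift to $\C^g$ carries a factor of automorphy valued in $\GL_r(\sco(\C^g))$, and nothing in your sketch forces it to be constant. The appeal to Mukai's semihomogeneous bundles will not close this gap: that theory is built on the Fourier--Mukai transform and the dual abelian variety for \emph{algebraic} tori, and ``rigidity from $\End(X)=\Z$'' does not by itself constrain higher-rank bundles.

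What actually makes the argument work in the paper is a two-step mechanism you would need to supply. First, the genericity hypothesis is used in the form $\NS(X)=\Hdg^4(X)=0$, which by Voisin's argument \cite{voisin} forces $c_1=c_2=0$ for the relevant (descended, untwisted) sheaf on an isogeny cover $X'$ trivializing $\alpha$. Second, after reducing by induction on the rank to a sheaf with no subsheaves of smaller rank --- hence $\mu$-stable, with pullback $\mu$-polystable by \cite[Prop.~2.3]{finite_map} --- the Bando--Siu theorem \cite[Corollary 3]{bando_siu} converts $\mu$-polystability together with $c_1=c_2=0$ into genuine flatness. Only at that point does one obtain the (projective) representation of $\pi_1$ that your Heisenberg argument needs, or equivalently the Chern-character identity $\ch(E_0') = N\cdot\mathbf 1$ that the paper feeds into Lemma~\ref{lem:rational_b_field_annihilator}. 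Without an input of Kobayashi--Hitchin type, the passage from ``holomorphic twisted bundle'' to ``finite-dimensional projective representation'' is unjustified, so the proposal as written does not prove the hard inequality. Note also that even granting flatness for stable bundles, you still need the reduction to the stable case (rank additivity along a destabilizing subsheaf), which your sketch omits.
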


	More precisely, Theorem~\ref{thm:main} holds for a complex torus $X$ with $\NS(X) = \Hdg^4(X) = 0$. The proof of Theorem~\ref{thm:main} is based on an argument of Voisin \cite{voisin} showing that a general complex torus of dimension at least three does not satisfy the resolution property, along with an analysis of the Hodge-theoretic properties of $\alpha$-twisted sheaves in the manner of \cite{mypaper}. As an immediate consequence of Theorem~\ref{thm:main}, we obtain the following result:

	\begin{corollary}	
	\label{cor:failure}
	    Let $X$ be a general complex torus of dimension $g \geq 3$. For each $n > 0$, there exists a Brauer class $\alpha$ with $\per(\alpha) = n$, $\ind(\alpha) = n^g$.
	\end{corollary}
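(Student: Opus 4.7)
My plan is to combine Theorem~\ref{thm:main} with the explicit computation of the annihilator indicated in the paragraph preceding the theorem (Lemma~\ref{lem:annihilator_calculation}). Since Theorem~\ref{thm:main} gives $\ind(\alpha) = \Ann(\alpha)$ for every $\alpha \in \Br(X)$, it suffices to exhibit, for each $n > 0$, a Brauer class of period exactly $n$ whose annihilator equals $n^g$.

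A general complex torus satisfies $\NS(X) = 0$, and the introduction states that in this situation $\Ann(\alpha)$ achieves every positive divisor of $n^g$ as $\alpha$ ranges over $\Br(X)[n]$. I would therefore pick any $\alpha \in \Br(X)[n]$ with $\Ann(\alpha) = n^g$.

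The only remaining point is that such an $\alpha$ has period exactly $n$, and not merely dividing $n$. Suppose towards a contradiction that $\per(\alpha) = m$ for some proper divisor $m$ of $n$. Then $\alpha \in \Br(X)[m]$, so applying the same annihilator calculation at level $m$ yields $\Ann(\alpha) \mid m^g$. This forces $n^g \mid m^g$, hence $n \mid m$; combined with $m \mid n$, this gives $m = n$, contrary to assumption. Therefore $\per(\alpha) = n$, and by Theorem~\ref{thm:main},
\[
    \ind(\alpha) = \Ann(\alpha) = n^g.
\]

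With Theorem~\ref{thm:main} and Lemma~\ref{lem:annihilator_calculation} in hand, the deduction is purely formal; I do not anticipate any genuine obstacle in the proof of the corollary, which explains the author's description of it as an immediate consequence.
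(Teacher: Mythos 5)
Your proposal is correct and follows essentially the same route as the paper, which deduces the corollary from Theorem~\ref{thm:main} together with Remark~\ref{rem:NS_zero} and Lemma~\ref{lem:annihilator_calculation} (e.g.\ taking $\omega=\sum_{i=1}^g e_i\wedge e_{i+g}$ with all coefficients equal to $1\in\Z/n$). Your contrapositive check that $\per(\alpha)=n$ is fine, though it can be seen even more directly from the fact that the exponential $\Ho^2(X,\Z/n)\to\Br(X)[n]$ is a group isomorphism, so $\alpha$ has the same order as $\omega$.
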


	In particular, the extension of Conjecture~\ref{conj:globalpic} to even the most familiar compact K\"ahler manifolds is false. By contrast, we prove the period-index conjecture for abelian threefolds in forthcoming work with Perry \cite{joint}. 

	One may interpret Theorem~\ref{thm:main} as the statement that the rank of any locally free $\alpha$-twisted sheaf is divisible by $\Ann(\alpha)$. In fact, we show that the rank of an arbitrary $\alpha$-twisted coherent sheaf on $X$ is divisible by $\Ann(\alpha)$ (Theorem~\ref{thm:key}), so that using a \emph{coherent} index (Definition~\ref{def:coherent_index}) does not correct for the failure of the expected period-index bounds.

	\subsection{Organization of the paper}

	In \S \ref{sec:prelim}, we review background on Brauer groups and Severi--Brauer varieties. In \S \ref{sec:annihilator}, we establish some linear algebraic properties of the annihilator of a Brauer class. In \S \ref{sec:mainresult}, we prove our main result, Theorem~\ref{thm:key}.

	\subsection{Acknowledgements} I am grateful to Stefan Schreieder, whose suggestion that the period-index conjecture might fail for non-projective compact K\"ahler manifolds led to the present paper. In addition, I thank Alex Perry for several useful discussions.

	\section{Preliminaries}
	\label{sec:prelim}

	For the definition of the Brauer group of a complex manifold and its basic properties, we refer to \cite{schroer}. If $X$ is a connected complex manifold and $\alpha \in \Br(X)$, we recall from \S \ref{sec:intro} that the \emph{period} $\per(\alpha)$ is the order of $\alpha$ in $\Br(X)$, and the \emph{index} of $\alpha$ is given by
	\[
		\ind(\alpha) = \gcd(\deg \sca : [\sca] = \alpha),
	\]
	where $\sca$ runs over the Azumaya algebras of class $\alpha$. (We recall that the \emph{degree} of an Azumaya algebra is the square root of its rank.)

	\begin{lemma}
	\label{lem:prime_factors}
	    Let $X$ be a connected complex manifold, and $\alpha \in \Br(X)$. Then $\per(\alpha)$ divides $\ind(\alpha)$, and $\per(\alpha)$ and $\ind(\alpha)$ share prime factors. 
	\end{lemma}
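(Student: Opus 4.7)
The plan is to treat the two assertions separately. For $\per(\alpha) \mid \ind(\alpha)$, I would use the cohomological interpretation: an Azumaya algebra $\sca$ of degree $n$ and class $\alpha$ is classified by a $\PGL_n(\sco_X)$-torsor on $X$. The short exact sequence
\[
	1 \to \bmu_n \to \SL_n(\sco_X) \to \PGL_n(\sco_X) \to 1
\]
of sheaves on the analytic site of $X$ (surjectivity on the right uses that nonvanishing holomorphic functions admit local $n$-th roots) produces a connecting map $H^1(X, \PGL_n(\sco_X)) \to H^2(X, \bmu_n)$ whose composition with the natural map $H^2(X, \bmu_n) \to H^2(X, \sco_X^*) = \Br(X)$ sends $[\sca]$ to $\alpha$. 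Since $H^2(X, \bmu_n)$ is $n$-torsion, $n\alpha = 0$, hence $\per(\alpha) \mid n$. Taking the gcd over all Azumaya representatives of $\alpha$ then yields $\per(\alpha) \mid \ind(\alpha)$.

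For the statement that $\per(\alpha)$ and $\ind(\alpha)$ share prime factors, the strategy is primary decomposition. Writing $\per(\alpha) = \prod_i p_i^{a_i}$, Bezout (equivalently, the Chinese remainder theorem applied to $\Z/\per(\alpha)\Z$) produces a decomposition $\alpha = \sum_i \alpha_i$ with $\per(\alpha_i) = p_i^{a_i}$ and each $\alpha_i$ an explicit integer multiple of $\alpha$. Tensoring Azumaya representatives $\sca_i$ of the $\alpha_i$ gives a representative of $\alpha$, so $\ind(\alpha) \mid \prod_i \ind(\alpha_i)$. Combined with the first part---which supplies the inclusion of primes of $\per(\alpha)$ into primes of $\ind(\alpha)$---it therefore suffices to show that $\ind(\alpha_i)$ is a power of $p_i$.

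This last step is the main non-trivial input, and the plan is simply to cite the general result of Antieau--Williams \cite{topos} already referenced in the introduction; in their setting (and applicable here via the identification of $\Br(X)$ with sheaf cohomology), the index of a Brauer class of prime-power period is itself a power of that same prime. Assembling the cohomological argument, the primary decomposition, and the Antieau--Williams input then yields both assertions of the lemma.
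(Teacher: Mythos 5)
Your proposal is correct and ends up in the same place as the paper, whose entire proof of this lemma is the citation ``See \cite{topos}'': the divisibility $\per(\alpha)\mid\ind(\alpha)$ via the $\bmu_n\to\SL_n\to\PGL_n$ connecting map and the primary decomposition $\alpha=\sum_i\alpha_i$ are exactly the standard reductions, and the one genuinely non-formal input (that a class of period $p^a$ has index a power of $p$) is deferred to Antieau--Williams just as the paper defers the whole statement. The only thing to keep in mind is that this last citation is doing essentially all the work for the second assertion, since ``prime-power period implies prime-power index'' is, after your reduction, equivalent to the claim being proved.
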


	\begin{proof}
	    See \cite{topos}. 
	\end{proof}

	\begin{remark}
	\label{rem:oka}
		Suppose that $X$ is a connected Stein manifold, and consider the homomorphism
		\[
			\Br(X) \to \Ho^3(X, \Z)^{\tors}.
		\]
		By the Grauert--Oka principle \cite[\S 8.2]{oka}, the set of isomorphism classes of holomorphic and continuous torsors under $\PGL_{n + 1}$ coincide, for each $n$. It follows that the map above is an isomorphism, and that the period-index problem for $X$ is equivalent to the topological period-index problem for the CW complex underlying $X$. Hence, one may obtain bounds from \cite{top_per_ind}. For instance, 
		\[
			\ind(\alpha) \divides \per(\alpha)^{\dim X - 1}
		\]
		if $\per(\alpha)$ is prime to $(\dim X - 1)!$.
	\end{remark}

	It is frequently useful to work with $\alpha$-twisted sheaves in the sense of C{\u{a}}ld{\u{a}}raru \cite{cald_thesis}. We write $\Coh^\alpha(X)$ for the abelian category of $\alpha$-twisted sheaves (where, as is customary, we often suppress the choice of a cocycle representative for $\alpha$). One has the following well-known result:

	\begin{lemma}
	\label{lem:twisted_sheaf_index}
	    Let $X$ be a complex manifold, and let $\alpha \in \Br(X)$. Then 
	    \[
	    	F \mapsto \sEnd(F)
	    \]
	    gives a bijection between the set of isomorphism classes of locally free $\alpha$-twisted sheaves, and the set of isomorphism classes of Azumaya algebras on $X$ of class $\alpha$.
	\end{lemma}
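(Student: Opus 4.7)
The plan is to produce an explicit inverse to $F \mapsto \sEnd(F)$ using Skolem--Noether, and to verify both composites on the level of \v{C}ech cocycles. Fix a cocycle $\alpha_{ijk} \in \sco^\times(U_{ijk})$ representing $\alpha$ on a sufficiently fine open cover $\{U_i\}$ of $X$.

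In the forward direction, a locally free $\alpha$-twisted sheaf $F$ of rank $n$ is given, after refining the cover, by local trivializations $F|_{U_i} \simeq \sco_{U_i}^{\oplus n}$ and transition matrices $g_{ij} \in \GL_n(\sco(U_{ij}))$ satisfying $g_{ij} g_{jk} g_{ki} = \alpha_{ijk} \cdot \id$. The induced transitions on $\sEnd(F)|_{U_i} \simeq \mathrm{M}_n(\sco_{U_i})$ are the conjugations $\phi \mapsto g_{ij} \phi g_{ij}^{-1}$, in which the central scalars $\alpha_{ijk}$ cancel; the result is an honest cocycle, so $\sEnd(F)$ glues to an Azumaya algebra of degree $n$. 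Its Brauer class is computed from the induced $\PGL_n$-cocycle $\bar g_{ij}$ by the boundary map for the central extension $1 \to \G_m \to \GL_n \to \PGL_n \to 1$, which by construction returns $\alpha_{ijk}$.

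Conversely, given an Azumaya algebra $\sca$ of degree $n$ and class $\alpha$, refine the cover so that $\sca|_{U_i} \simeq \sEnd(V_i)$ for some rank-$n$ bundles $V_i$ on $U_i$. The transition isomorphism $\sEnd(V_j)|_{U_{ij}} \simeq \sEnd(V_i)|_{U_{ij}}$ is an $\sco$-algebra automorphism of a matrix algebra, so by Skolem--Noether it is conjugation by an isomorphism $g_{ij} : V_j|_{U_{ij}} \to V_i|_{U_{ij}}$ that is unique up to an $\sco^\times$-scalar. These $g_{ij}$'s then satisfy $g_{ij} g_{jk} g_{ki} = c_{ijk} \cdot \id$ for some $2$-cocycle $c_{ijk}$ representing $[\sca] = \alpha$, and after rescaling each $g_{ij}$ by a suitable unit one may arrange $c_{ijk} = \alpha_{ijk}$ on the nose, producing a locally free $\alpha$-twisted sheaf $F$ with $\sEnd(F) \simeq \sca$.

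Finally, the bijection on isomorphism classes is checked directly in cocycles: an isomorphism $F \simeq G$ of $\alpha$-twisted sheaves is given by local automorphisms $h_i$ intertwining the two sets of transition matrices, and these descend to an $\sco_X$-algebra isomorphism $\sEnd(F) \simeq \sEnd(G)$; conversely, any such algebra isomorphism is Skolem--Noether conjugation by some $(h_i)$, producing the corresponding isomorphism of $\alpha$-twisted sheaves. The main obstacle is the scalar adjustment in the inverse construction: the naive $g_{ij}$'s produce only a representative of $\alpha$ rather than the fixed cocycle $\alpha_{ijk}$, and one must absorb the coboundary $c_{ijk}/\alpha_{ijk}$ into the choices of the $g_{ij}$, which is routine once the cover is fine enough to apply surjectivity of the relevant \v{C}ech differential.
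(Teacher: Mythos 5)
Your argument is the standard cocycle proof (the paper itself simply cites C{\u{a}}ld{\u{a}}raru's Theorem 1.3.5, which is proved in essentially this way), and the forward direction and the surjectivity construction are correct: the scalar rescaling needed to turn the Skolem--Noether conjugators $g_{ij}$ into a genuine $\alpha_{ijk}$-twisted cocycle is exactly the point that needs care there, and you handle it.

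The gap is in the final injectivity step, where the same scalar ambiguity resurfaces and you do not control it. An $\sco_X$-algebra isomorphism $\sEnd(F) \simeq \sEnd(G)$ is locally conjugation by isomorphisms $h_i \colon F|_{U_i} \to G|_{U_i}$, each determined only up to a unit; comparing with the transition matrices gives $h_i\, g^F_{ij} = \lambda_{ij}\, g^G_{ij}\, h_j$ for a $1$-cocycle $(\lambda_{ij})$ valued in $\sco_X^{\times}$, so what you actually produce is an isomorphism $G \simeq F \otimes L$, where $L$ is the line bundle of class $[(\lambda_{ij})] \in \Ho^1(X, \sco_X^{\times}) = \Pic(X)$. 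That class is an obstruction which cannot be rescaled away, and injectivity genuinely fails whenever $\Pic(X)$ acts nontrivially: $\sEnd(F \otimes L) \simeq \sEnd(F)$ for every line bundle $L$, while $F \otimes L \not\simeq F$ in general (already for $\alpha = 0$, $F = \sco_X$, and $L$ nontrivial one has $\sEnd(\sco_X) \simeq \sco_X \simeq \sEnd(L)$). So $F \mapsto \sEnd(F)$ is a bijection only after passing to orbits of the $\Pic(X)$-action $F \mapsto F \otimes L$ on the source, which is how the cited theorem should be read. For the way the lemma is actually used in the paper --- that $\ind(\alpha)$ equals the gcd of the ranks of locally free $\alpha$-twisted sheaves --- only surjectivity together with $\deg \sEnd(F) = \rk F$ is needed, and that part your proof does establish.
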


	\begin{proof}
	    We refer to \cite[Theorem 1.3.5]{cald_thesis}, which holds without change in the analytic case. 
	\end{proof}

	From Lemma~\ref{lem:twisted_sheaf_index}, the index of $\alpha$ coincides with the greatest common divisor of the ranks of locally free $\alpha$-twisted sheaves. It is also natural to consider the ranks of arbitrary $\alpha$-twisted coherent sheaves, which leads to the following variant of the index:

		\begin{definition}
	\label{def:coherent_index}
	    Let $X$ be a complex manifold, with $\alpha \in \Br(X)$. The \emph{coherent index} $\ind_{\Coh}(\alpha)$ is the greatest common divisor of the ranks of $\alpha$-twisted coherent sheaves on $X$.
	\end{definition}

	We note that $\ind_{\Coh}(\alpha)$ divides $\ind(\alpha)$. It follows from Theorem~\ref{thm:key} below that $\ind_{\Coh}(\alpha)$ and $\ind(\alpha)$ coincide when $X$ is a general complex torus, but we do not know if they coincide for Brauer classes on arbitrary complex manifolds.

	\begin{remark}
		A famous question of Grothendieck \cite{grothendieck} asks when the inclusion
		\[
			\Br(X) \subset \Ho^2(X, \sco_X^{\times})^{\tors}
		\]
		is an equality. While the answer is positive in the projective case \cite{dj_gabber}, the question remains widely open in the compact K\"ahler case. It is known, however, for compact K\"ahler surfaces (cf. \cite{huy_schroer} and \cite{schroer}), as well as complex tori (cf. \cite{projective_complex_torus}, or one may deduce it from the proof of Lemma~\ref{lem:annihilator_index_bound} below).
	\end{remark}

		\begin{lemma}
	\label{lem:annihilator_index_bound}
	    Let $f:X' \to X$ be a finite cover of complex manifolds. Given $\alpha \in \Br(X)$ such that $f^* \alpha = 0$,
	    \[
	    	\ind(\alpha) \divides \deg f.
	    \]
	\end{lemma}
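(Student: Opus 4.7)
The plan is to construct an explicit locally free $\alpha$-twisted sheaf of rank $\deg f$ on $X$ by pushing forward an invertible twisted sheaf from $X'$, and then apply Lemma~\ref{lem:twisted_sheaf_index}.

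First, I would use the hypothesis $f^*\alpha = 0$ to produce an invertible $f^*\alpha$-twisted sheaf $L$ on $X'$. Fixing a \v Cech cocycle $\{\alpha_{ijk}\}$ representing $\alpha$ on an open cover $\{U_i\}$ of $X$, the pullback cocycle $\{f^*\alpha_{ijk}\}$ becomes a coboundary on a refinement of $\{f^{-1}U_i\}$, say $f^*\alpha_{ijk} = \delta(\lambda)_{ijk}$ for some \v Cech $1$-cochain $\lambda = \{\lambda_{ij}\}$ in $\sco_{X'}^\times$. The collection $\{\lambda_{ij}\}$ glues to an invertible sheaf twisted by $f^*\alpha$, giving the desired $L$.

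Next, I would form the pushforward $f_*L$ and argue that it is an $\alpha$-twisted coherent sheaf on $X$ of rank $\deg f$. The $\alpha$-twisting is automatic from the construction, since transition data for $f_*L$ are expressed via the cocycle $\{\alpha_{ijk}\} = \{\delta(\lambda)_{ijk}^{-1} \cdot f^*\alpha_{ijk}\}$ read on $X$ through the finite map. The key technical input is that $f_*L$ is locally free of rank $\deg f$: since $X$ and $X'$ are complex manifolds (necessarily of the same dimension, as $f$ is finite and surjective), the analytic miracle flatness theorem ensures that $f$ is flat; combined with $f$ being finite and $\rk L = 1$, this gives local freeness of $f_*L$ of rank equal to the degree of $f$.

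The main obstacle, and essentially the only non-formal point, is the local freeness of $f_*L$ in the holomorphic category; in the algebraic setting it is immediate, and in the analytic setting one invokes the analytic analogue of miracle flatness (a finite morphism from a Cohen--Macaulay analytic space to a smooth one of the same dimension is flat), together with the fact that flat coherent sheaves on a complex manifold are locally free. Once this is in hand, Lemma~\ref{lem:twisted_sheaf_index} (applied to $\sEnd(f_*L)$) shows that $\deg f$ lies in the set of degrees of Azumaya algebras of class $\alpha$, so $\ind(\alpha) \mid \deg f$.
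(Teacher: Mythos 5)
Your proposal is correct and follows essentially the same route as the paper: produce an $f^*\alpha$-twisted line bundle $L$ on $X'$ from the vanishing of the pulled-back cocycle, push it forward to get a locally free $\alpha$-twisted sheaf of rank $\deg f$, and conclude via Lemma~\ref{lem:twisted_sheaf_index}. The only difference is that you justify local freeness of $f_*L$ via miracle flatness for a general finite surjection, whereas the paper takes $f$ to be a finite covering map (as in the intended application to isogenies of tori), where this is immediate.
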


	\begin{proof}
	    Let $\tilde \alpha$ be a cocycle representative for $\alpha$, and let $L$ be a $f^*\tilde \alpha$-twisted line bundle on $X'$. Then $f_* L$ is a locally free $\tilde \alpha$-twisted coherent sheaf of rank $\deg f$ on $X$, and we conclude by Lemma~\ref{lem:twisted_sheaf_index}.
	\end{proof}

	In order to apply an analytic result in the proof of Theorem~\ref{thm:key} below, it will be convenient to treat $\alpha$-twisted sheaves on $X$ as genuine sheaves on a Severi--Brauer variety. Recall that a Severi--Brauer variety over a connected complex manifold $X$ is a smooth, proper $\PGL_{n + 1}$-equivariant morphism $\P \to X$ (where the action on $X$ is trivial), which is locally on $X$ of the form $\P^n \times U \to U$, for $U \subset X$. A Severi--Brauer variety $\P \to X$ has a Brauer class $[\P] \in \Br(X)$, which obstructs the existence of a holomorphic vector bundle $E$ on $X$ such that $\P \simeq \P(E)$.

	We warn the reader that we follow Giraud's convention \cite[Example V.4.8]{giraud} for the Brauer class of a Severi--Brauer variety. If one adopts the opposite convention, then the category of weight-$k$ sheaves defined below is equivalent to the category of $\alpha^{-k}$-twisted sheaves.

	\begin{definition}
	\label{def:1_twisted}
	    Let $X$ be a complex manifold, and let $\P \to X$ be a Severi--Brauer variety. A coherent sheaf $E$ on $\P$ has \emph{weight $k$} if, for any $x \in X$, the restriction of $E$ to the fiber $\P_x$ is isomorphic to $V \otimes \sco_{\P_x}(k)$ for a $\C$-vector space $V$. 

	    Let $\Coh^k(\P/X)$ be the abelian category of weight-$k$ coherent sheaves on $\P$. For simplicity, we often write $\Coh^k(\P)$, with the morphism to $X$ implicit.
	\end{definition}

\begin{remark}[Descent]
	\label{rem:descent}
		In the setting above, let $E$ be a weight-$0$ coherent sheaf on $\P$. Then the natural morphism
		\[
			\pi^* \pi_* E \to E
		\]
		is an isomorphism. 
	\end{remark}

	\begin{lemma}
	\label{lem:weight_one_vs_twisted}
	    Let $X$ be a complex manifold, and let $\alpha \in \Br(X)$, and let $\pi:\P \to X$ be a Severi--Brauer variety of class $\alpha$. There exists a rank-preserving equivalence 
	    \[
	    	\Coh^\alpha(X) \simeq \Coh^1(\P).
	    \]
	\end{lemma}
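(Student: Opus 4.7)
The plan is to work locally on $X$ and glue. I would begin by choosing an open cover $\{U_i\}$ of $X$ trivializing $\pi$, with isomorphisms $\phi_i: \P|_{U_i} \xrightarrow{\sim} \P^n \times U_i$ whose transitions $\phi_i \phi_j^{-1}$ give a $\PGL_{n+1}$-valued $1$-cocycle $g_{ij}$. After refining if necessary, lift the $g_{ij}$ to $\GL_{n+1}$-valued cochains $\tilde g_{ij}$; the resulting coboundary is a $\sco_X^{\times}$-valued $2$-cocycle $\tilde \alpha = (\tilde \alpha_{ijk})$ representing $\alpha$ in Giraud's convention. Write $\pi_i$ for the restriction of $\pi$ over $U_i$. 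The essential geometric input is the tautological weight-$1$ line bundle $L_i := \phi_i^* q^* \sco_{\P^n}(1)$ on $\P|_{U_i}$, where $q: \P^n \times U_i \to \P^n$ is projection: the chosen lifts $\tilde g_{ij}$ act naturally on $\sco_{\P^n}(1)$, producing isomorphisms $\psi_{ij}: L_i \xrightarrow{\sim} L_j$ over $\P|_{U_{ij}}$ whose failure of cocycle on triple overlaps is precisely $\tilde \alpha_{ijk}$.

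Next, I would construct the two functors explicitly. Define $\Psi: \Coh^\alpha(X) \to \Coh^1(\P)$ by sending a $\tilde \alpha$-twisted datum $(F_i, \varphi_{ij})$ to the sheaf on $\P$ obtained by gluing $\pi_i^* F_i \otimes L_i$ along the combined transitions $\varphi_{ij} \otimes \psi_{ij}$; the twisted cocycle identity satisfied by $\varphi_{ij}$ exactly cancels the cocycle defect of the $L_i$, producing a well-defined weight-$1$ sheaf on $\P$. In the other direction, define $\Phi: \Coh^1(\P) \to \Coh^\alpha(X)$ by $G \mapsto (F_i, \varphi_{ij})$, where $F_i := (\pi_i)_*(G|_{\P|_{U_i}} \otimes L_i^{-1})$ and $\varphi_{ij}$ is induced by $\psi_{ij}^{-1}$. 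Since $G \otimes L_i^{-1}$ has weight $0$, Remark~\ref{rem:descent} shows that it is canonically pulled back from $U_i$, so $F_i$ is coherent, and the induced transitions inherit the expected $\tilde \alpha$-twisted cocycle defect.

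The verification that $\Phi$ and $\Psi$ are mutually inverse is then straightforward: one direction uses the projection formula, while the other uses the descent identity $\pi_i^*(\pi_i)_*(G \otimes L_i^{-1}) \otimes L_i \simeq G$ on $\P|_{U_i}$, which is Remark~\ref{rem:descent} applied to $G \otimes L_i^{-1}$. Rank preservation follows by restriction to a fiber $\P_x$: if $G|_{\P_x} \simeq V_x \otimes \sco_{\P_x}(1)$ with $\dim V_x = r$, then $(F_i)_x = V_x$, so the associated $\alpha$-twisted sheaf has rank $r = \rk G$.

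The main obstacle I anticipate is bookkeeping rather than anything conceptual: one must check that the two constructions are independent of the choices of trivialization, refinement of the cover, and $\GL_{n+1}$-lift of $g_{ij}$, so that they assemble into genuine functors on abelian categories. This is a standard $\sco_X^\times$-gerbe descent argument, but it requires careful tracking of signs to confirm that the cocycle defect of the $L_i$ matches $\tilde \alpha_{ijk}$ rather than $\tilde \alpha_{ijk}^{-1}$, consistent with the convention flagged before Definition~\ref{def:1_twisted}.
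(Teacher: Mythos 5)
Your proposal is correct and is essentially the paper's argument: the paper constructs a $\pi^*\tilde\alpha^{-1}$-twisted relative hyperplane bundle $\sco_{\P}^{\tw}(1)$ and defines the equivalence by $E \mapsto \pi^*E \otimes \sco_{\P}^{\tw}(1)$ with inverse $F \mapsto \pi_*(F \otimes \sco_{\P}^{\tw}(-1))$, which is exactly your construction once your local tautological bundles $L_i$ with gluing defect $\tilde\alpha_{ijk}$ are recognized as the \v{C}ech data of that twisted line bundle. Your version merely unpacks this in explicit cocycle language (and in doing so makes explicit the point, left implicit in the paper, that the twisted line bundle can be chosen with fiberwise degree $1$), so no further comparison is needed.
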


	\begin{proof}
	    Let $\tilde \alpha$ be a cocycle representative for $\alpha$. Since $\pi^* \alpha$ is trivial, there exists an $\pi^* \tilde \alpha^{-1}$-twisted line bundle $\sco_{\P}^{\tw}(1)$ on $\P$. The equivalence is given by $E \mapsto \pi^* E \otimes \sco_{\P}^{\tw}(1)$, with inverse equivalence $F \mapsto \pi_*(F \otimes \sco_{\P}^{\tw}(-1))$.
	\end{proof}

	\begin{remark}[Topologically trivial Brauer classes]
	\label{rem:top_triv}
		Let $\P \to X$ be a Severi--Brauer variety, and suppose that the Brauer class $\alpha$ of $\P$ is \emph{topologically trivial}, i.e., lies in the kernel of the homomorphism
		\[
			\Br(X) \subset \Ho^2(X, \sco_X^{\times})^{\tors} \to \Ho^3(X, \Z)^{\tors}
		\]
		arising from the exponential sequence. Then $\P \to X$ is the projectivization of a topological complex vector bundle.

		In particular, there exists a \emph{weight$-1$ topological line bundle} $\sco_{\P}^{\topo}(1)$ on $\P$, whose restriction to the fiber $\P_x$ over $x \in X$ is isomorphic to $\sco_{\P_x}(1)$. Moreover, by the Leray--Hirsch theorem, there is a decomposition 
		\[
			\ktop_0(\P) = \ktop_0(X) \oplus \ktop_0(X) \cdot [\sco_{\P}^{\topo}(1)] \oplus \cdots \oplus \ktop_0(X) \cdot [\sco_{\P}^{\topo}(1)^{\otimes d}],
		\]
		where $d$ is the relative dimension of $\P \to X$.
	\end{remark}

	\begin{definition}
	\label{def:rational_b_field}
	    Let $X$ be a complex manifold, and let $\alpha \in \Br(X)$. A \emph{rational $B$-field} for $\alpha$ is a class $B \in \Ho^2(X, \Q)$ whose image under the homomorphism
	    \[
	    	\exp(2 \pi i \cdot -):\Ho^2(X, \Q) \to \Ho^2(X, \sco_X^{\times})^{\tors}
	    \]
	    is $\alpha$. We note that a rational $B$-field for $\alpha$ exists if and only if $\alpha$ is topologically trivial.
	\end{definition}

	\begin{remark}
	\label{rem:b_field_lb}
	Let $X$ be a compact K\"ahler manifold, and let $\pi:\P \to X$ be a Severi--Brauer variety of class $\alpha$. Suppose that $\alpha$ is topologically trivial. For any weight-$1$ topological line bundle $\sco_{\P}^{\topo}(1)$ on $\P$,  
	\[
		c_1(\sco_{\P}^{\topo}(1)) = H + \pi^*B,
	\]
	where $H \in \NS(\P)_{\Q}$ is a class whose restriction to any fiber $\P_x$ is $\sco_{\P_x}(1)$, and $B \in \Ho^2(X^{\an}, \Q)$ is a rational $B$-field for $\alpha$. We refer to \cite[Lemma 5.9]{dejong_perry}, which however uses the opposite convention for the class of a Severi--Brauer variety (hence the difference in sign), and which is stated for a smooth, proper variety $X$ but holds in the compact K\"ahler case by an identical argument.
	\end{remark}

	\section{The annihilator of a Brauer class}
	\label{sec:annihilator}

	\begin{definition}
	\label{def:annihilator}
	    Let $X$ be a complex torus.
	    \begin{enumerate}
	     	\item The \emph{annihilator} of a class $\omega \in \Ho^2(X, \Z/n)$ is the least degree of a finite isogeny $f:X' \to X$ such that $f^* \omega = 0$.
	     	\item The \emph{annihilator} of a Brauer class $\alpha \in \Br(X)$ is the least degree of a finite isogeny $f:X' \to X$ such that $f^* \alpha = 0$.
	     \end{enumerate} 
	\end{definition}

	By Lemma~\ref{lem:annihilator_index_bound}, $\ind(\alpha)$ divides $\Ann(\alpha)$.

	\begin{remark}
	\label{rem:NS_zero}
		Suppose that $X$ is a complex torus with $\NS(X) = 0$. From the Kummer sequence, the exponential map
		\[
			\Ho^2(X, \Z/n) \to \Br(X)[n]
		\]
		is an isomorphism. If $\alpha \in \Br(X)[n]$, then $\alpha$ lies in the image of a unique class $\omega \in \Ho^2(X, \Z/n)$, and $\Ann(\alpha) = \Ann(\omega)$.
	\end{remark}

	The following lemma computes $\Ann(\omega)$ in terms of linear algebra. For an element $a \in \Z/n$, we write $\Ann_{\Z}(a) \in \Z$ for the positive generator of the annihilator ideal of $a$.

	\begin{lemma}
	\label{lem:annihilator_calculation}
	    Let $X$ be a complex torus of dimension $g$, with $\omega \in \Ho^2(X, \Z/n)$. Suppose that $e_1, \dots, e_{2g}$ is a basis for $\Ho^1(X, \Z)$ such that
	    \[
	    	\omega = \sum_{i = 1}^r a_i e_{i} \wedge e_{i + g},
	    \]
	    for $0 \neq a_i \in \Z/n$ and $0 \leq r \leq g$. Then 
	    \[
	    	\Ann(\omega) = \prod_{i = 1}^r \Ann_{\Z}(a_i).
	    \]
	\end{lemma}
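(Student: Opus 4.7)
The upper bound $\Ann(\omega) \le \prod_{i=1}^r \Ann_\Z(a_i)$ follows directly from Lemma~\ref{lem:annihilator_index_bound} applied to the isogeny $V/\Lambda' \to V/\Lambda$, where $\Lambda' \subset \Lambda = H_1(X,\Z)$ is spanned by $\Ann_\Z(a_i) \cdot f_{i+g}$ for $i = 1, \dots, r$ together with all other basis vectors $f_j$ (dual to $e_j$); a direct computation with the expression $\omega = \sum a_i e_i \wedge e_{i+g}$ shows that $\omega$ pulls back to zero modulo $n$, and the index is $\prod \Ann_\Z(a_i)$.

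For the lower bound, I would proceed in three reductions. First, by decomposing $n = \prod p^{N_p}$ and observing that an isogeny kills $\omega$ modulo $n$ if and only if its $p$-primary part kills the $p$-component of $\omega$ modulo $p^{N_p}$ for each $p$, both sides of the claimed equality distribute over the primary decomposition, so it suffices to treat $n = p^N$. Second, I would reduce to the nondegenerate case $r = g$: let $W^\perp \subset \Lambda$ be the radical of the adjoint map $\omega^\sharp : \Lambda \to \Lambda^*$; this is a rank $2(g-r)$ primitive sublattice, and $\omega$ descends to a nondegenerate form $\tilde\omega$ on the rank-$2r$ quotient $W^* := \Lambda/W^\perp$, with the same diagonal invariants $a_1, \dots, a_r$. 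For any sublattice $\Lambda' \subset \Lambda$ with $f^* \omega \equiv 0 \pmod{p^N}$, the image $\Lambda'_W \subset W^*$ satisfies the analogous vanishing condition, and $[\Lambda : \Lambda'] \ge [W^* : \Lambda'_W]$, so it suffices to prove the bound for $\tilde\omega$ on $W^*$.

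Third, in the nondegenerate case $r = g$, write $a_i = p^{s_i}$ up to a unit, so $\Ann_\Z(a_i) = p^{t_i}$ with $t_i = N - s_i > 0$. Choose a Smith normal form $\Lambda' = \bigoplus_i p^{k_i} \Z h_i$ for $\Lambda'$ relative to a basis $h_1, \dots, h_{2g}$ of $\Lambda$, so that $[\Lambda : \Lambda'] = p^{\sum k_i}$, and set $c_{ij} := \omega(h_i, h_j)$. The condition $f^*\omega \equiv 0 \pmod{p^N}$ translates into the constraints $v_p(c_{ij}) + k_i + k_j \ge N$ for all $i \neq j$. The key invariant is the Pfaffian of the antisymmetric matrix $C = (c_{ij})$: since $C = M \Omega M^T$ with $M \in \GL_{2g}(\Z)$ (here $\Omega$ is the matrix of $\omega$ in the original basis), one has $\mathrm{pf}(C) = \pm \mathrm{pf}(\Omega) = \pm \prod a_i$, hence $v_p(\mathrm{pf}(C)) = \sum s_i$. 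Expanding $\mathrm{pf}(C) = \sum_{M} \mathrm{sgn}(M) \prod_{\{i,j\} \in M} c_{ij}$ as a sum over perfect matchings of $[2g]$ and applying the ultrametric inequality produces a matching $M^*$ with $\sum_{\{i,j\} \in M^*} v_p(c_{ij}) \le \sum s_i$. Summing the constraint along $M^*$ yields
\[
	\sum_{i} k_i \;=\; \sum_{\{i,j\} \in M^*}(k_i + k_j) \;\ge\; \sum_{\{i,j\} \in M^*} \max\bigl(0,\, N - v_p(c_{ij})\bigr) \;\ge\; gN - \sum s_i \;=\; \sum t_i,
\]
where the final inequality is a short case analysis isolating pairs with $v_p(c_{ij}) \ge N$ (their contribution to $\sum_{M^*} v_p(c_{ij})$ is at least $N$ per pair).

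The main obstacle is the Pfaffian-matching step. Naive bounds, such as those coming from the transfer relation $f_* f^* \omega = (\deg f) \cdot \omega$, produce only LCM-type inequalities and fail to recover the product structure of $\prod \Ann_\Z(a_i)$. The insight is that the Pfaffian supplies a single global invariant whose combinatorial expansion over matchings precisely converts basis-independence of the invariants into a pairwise constraint that can be summed along a perfect matching to recover the correct total.
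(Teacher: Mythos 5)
Your proof is correct, but the lower bound takes a genuinely different and heavier route than the paper's. The paper lifts $\omega$ to an integral class $\eta = \sum_{i=1}^r a_i' e_i \wedge e_{i+g}$, observes that $f^*\omega = 0$ forces $n \mid f^*\eta$ and hence $n^r \mid f^*(\eta^r/r!)$, and then applies the projection formula $f_*f^* = \deg f$ to conclude $n^r \mid \deg f \cdot \prod a_i'$, which for $n$ a prime power is exactly the required divisibility. Your Pfaffian is this same invariant in disguise: in the nondegenerate case $\eta^g/g!$ is $\mathrm{pf}(\Omega)$ times a generator of $\Ho^{2g}$, and the projection formula amounts to the identity $\mathrm{pf}(\text{Gram matrix of } \eta \text{ on } \Lambda') = \pm[\Lambda:\Lambda']\cdot\mathrm{pf}(\Omega)$ combined with the elementary fact that the Pfaffian of an antisymmetric matrix all of whose entries are divisible by $p^N$ is divisible by $p^{Ng}$. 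That observation makes the Smith normal form, the matching expansion, and the ultrametric selection of $M^*$ unnecessary (every term of the expansion is already divisible by $p^{Ng}$, so you never need to isolate a single extremal matching), and working with $\eta^r/r!$ rather than $\eta^g/g!$ also removes the need for your reduction to $r = g$. If you keep your route, three small points want patching: the upper bound is not an application of Lemma~\ref{lem:annihilator_index_bound} (which bounds the index by the degree of a trivializing cover, not the annihilator) but follows from the definition of $\Ann(\omega)$ once the isogeny is exhibited; the entries $c_{ij}$ and the Pfaffian must be computed for an integral lift of $\omega$, which is harmless since any lift of $a_i \neq 0$ in $\Z/p^N$ has $p$-adic valuation $s_i < N$; and before writing $\Lambda' = \bigoplus_i p^{k_i}\Z h_i$ you should replace $\Lambda'$ by its $p$-saturation $\Lambda'' \supseteq \Lambda'$, whose index in $\Lambda$ is the $p$-part of $[\Lambda:\Lambda']$ and which still satisfies the vanishing condition mod $p^N$ because $\Lambda''/\Lambda'$ has order prime to $p$. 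What your approach buys is a purely lattice-theoretic argument with no cohomological push-forward; what the paper's buys is brevity and a uniform treatment of all $r$.
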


	\begin{proof}
	First, consider an isogeny $f:X' \to X$ of degree $\prod \Ann_{\Z}(a_i)$ such that 
    \[
    	f^* e_{i} = \Ann(a_i) \cdot e'_{i}, \quad 1 \leq i \leq r
    \]
    where $e'_1, \dots, e'_{2g}$ is a basis for $\Ho^1(X', \Z)$. Then $f^* \omega = 0$, so $\Ann(\omega) \leq \prod \Ann_{\Z}(a_i)$. 

	In the other direction, we may suppose that $n = p^e$ is a prime power. Let
	\[
	 	\eta = \sum_{i = 1}^r a_i' e_{i} \wedge e_{i + g} \in \Ho^2(X, \Z)
	 \] 
	 be a lift of $\omega$, and let $f:X' \to X$ be a finite isogeny such that $f^* \omega = 0$. Then $f^* \eta$ is divisible by $n$. In particular, 
    \begin{align*}
    	f_* f^*\eta^r/r! &= \deg f \cdot \prod_{i = 1}^r a'_i \\
    	 &\equiv 0 \mod n^r.
    \end{align*}
    Since $n$ is a prime power, it follows that $\prod \Ann_{\Z}(a_i)$ divides $\deg f$.
	\end{proof}

	If $X$ is a complex torus, then $\Ho^3(X, \Z)^{\tors} = 0$, so any Brauer class is topologically trivial and admits a rational $B$-field.

	\begin{lemma}
	\label{lem:rational_b_field_annihilator}
	    Let $X$ be a complex torus with $\NS(X) = 0$, and let $\alpha \in \Br(X)$. For any rational $B$-field $B$ for $\alpha$, let $N$ be the least positive integer such that 
	    \[
	    	N \cdot \exp(B) \in \Ho^{\ev}(X, \Q)
	    \]
	    lies in $\Ho^{\ev}(X, \Z)$. Then $N = \Ann(\alpha)$. 
	\end{lemma}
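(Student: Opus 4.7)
The strategy is to reduce to an explicit computation via the diagonalization of $\omega$ from Lemma~\ref{lem:annihilator_calculation}. By Remark~\ref{rem:NS_zero}, $\alpha$ corresponds to a unique $\omega \in \Ho^2(X, \Z/n)$ with $n = \per(\alpha)$, and $\Ann(\alpha) = \Ann(\omega)$. Pick a $\Z$-basis $e_1, \dots, e_{2g}$ of $\Ho^1(X, \Z)$ as in that lemma, so that $\omega = \sum_{i=1}^r a_i\, e_i \wedge e_{i+g}$, choose integer lifts $a'_i$ of $a_i$, and form the natural rational $B$-field $B_0 = \tfrac{1}{n}\sum a'_i\, e_i \wedge e_{i+g}$ for $\alpha$.

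First I would check that $N$ is independent of the choice of $B$-field. Since $\NS(X) = 0$, the Hodge decomposition makes $\Ho^2(X, \Q) \hookrightarrow \Ho^2(X, \sco_X)$ injective, so the kernel of $\Ho^2(X, \Q) \to \Br(X)$ is exactly $\Ho^2(X, \Z)$; hence any rational $B$-field for $\alpha$ has the form $B_0 + \gamma$ with $\gamma \in \Ho^2(X, \Z)$. Applying the Frobenius normal form for integer skew-symmetric forms, $\gamma$ takes the diagonal shape $\sum c_j f_j \wedge f_{j+g}$ in some $\Z$-basis of $\Ho^1(X, \Z)$, so $\exp(\gamma) = \prod(1 + c_j f_j \wedge f_{j+g})$ and $\exp(-\gamma)$ both lie in $\Ho^{\ev}(X, \Z)$. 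Thus multiplication by $\exp(\gamma)$ is an automorphism of $\Ho^{\ev}(X, \Z)$, and we may work with $B = B_0$.

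Next, because $(e_i \wedge e_{i+g})^2 = 0$ and the degree-two classes commute,
\[
\exp(B_0) = \prod_{i=1}^r \left(1 + \tfrac{a'_i}{n}\, e_i \wedge e_{i+g}\right) = \sum_{S \subset \{1, \dots, r\}} \frac{\prod_{i \in S} a'_i}{n^{|S|}} \bigwedge_{i \in S}(e_i \wedge e_{i+g}).
\]
The classes $\bigwedge_{i \in S}(e_i \wedge e_{i+g})$ are distinct primitive basis elements of $\Ho^{2|S|}(X, \Z)$, so $N \exp(B_0)$ lies in $\Ho^{\ev}(X, \Z)$ if and only if $n^{|S|}$ divides $N \prod_{i \in S} a'_i$ for every $S \subset \{1, \dots, r\}$.

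The main combinatorial step is then the identification $N = \prod_{i=1}^r \Ann_\Z(a_i)$, which I would verify prime-by-prime. For $n = p^e$, each $a'_i$ has $p$-adic valuation $f_i < e$ (since $a_i$ is nonzero in $\Z/n$), and the divisibility condition indexed by $S$ contributes $p^{\sum_{i \in S}(e - f_i)}$ to $N$. Since each term $e - f_i$ is positive, the maximum is attained at $S = \{1, \dots, r\}$, giving $v_p(N) = \sum_{i=1}^r (e - f_i) = v_p\bigl(\prod_i \Ann_\Z(a_i)\bigr)$. By Lemma~\ref{lem:annihilator_calculation}, $N = \prod \Ann_\Z(a_i) = \Ann(\omega) = \Ann(\alpha)$.
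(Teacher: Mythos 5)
Your overall strategy is the same as the paper's: diagonalize the $B$-field as $\tfrac1n\sum a_i'\,e_i\wedge e_{i+g}$, read off the integrality of $N\exp(B)$ coefficient by coefficient against the basis of $\bigwedge^{\ev}\Ho^1(X,\Z)$, and match the resulting prime valuations with the formula $\Ann(\omega)=\prod\Ann_\Z(a_i)$ from Lemma~\ref{lem:annihilator_calculation}. Your preliminary step --- showing via $\NS(X)=0$ that any two rational $B$-fields differ by an integral class $\gamma$ and that $\exp(\pm\gamma)$ is integral, so $N$ is independent of the choice --- is a worthwhile addition that the paper leaves implicit when it simply writes $B$ in diagonal form.

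There is one slip in the combinatorial step. You assert that $f_i=v_p(a_i')<e=v_p(n)$ ``since $a_i$ is nonzero in $\Z/n$,'' and conclude that every term $e-f_i$ is positive, so that the binding constraint is $S=\{1,\dots,r\}$. This is only true when $n$ is a prime power; for composite $n$, a nonzero $a_i\in\Z/n$ can have $v_p(a_i')\geq v_p(n)$ at some prime $p$ (e.g.\ $n=6$, $a_i'=4$, $p=2$), in which case $e-f_i\leq 0$ and the sum $\sum_{i}(e-f_i)$ over the full index set undercounts $v_p(N)$ --- it can even be negative. The statement ``verify prime-by-prime'' does not by itself license assuming $n=p^e$, since $N$ is determined by divisibility conditions involving all of $n$. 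The fix is immediate: for each prime $p\mid n$ the constraint indexed by $S$ gives $v_p(N)\geq\sum_{i\in S}(e-f_i)$, and the maximum over $S$ is attained at $S=\{i: f_i<e\}$, yielding $v_p(N)=\sum_i\max(0,e-f_i)=v_p\bigl(\prod_i\Ann_\Z(a_i)\bigr)$, since $v_p(\Ann_\Z(a_i\bmod n))=\max(0,e-f_i)$. With that adjustment the argument is complete.
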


	\begin{proof}
		Let $n = \per(\alpha)$. We write
		\[
			B = \sum_{i = 1}^r \frac{m_i}{n} e_{i} \wedge e_{i + g}, \quad m_i \neq 0 \in \Z, \quad 0 \leq r \leq g.
		\]
		Then the image $\omega$ of $n \cdot B$ in $\Ho^2(X, \Z/n)$ maps to $\alpha$ under the exponential. By Remark~\ref{rem:NS_zero}, $\Ann(\alpha)$ is equal to $\Ann(\omega)$. By Lemma~\ref{lem:annihilator_calculation}, $\Ann(\omega) = \prod \Ann_{\Z}(m_i \varmod n)$.

		With $N$ as in the statement of the lemma, we observe that $N \cdot B^r/r!$ is integral. For each prime factor $p$ of $n$, with $v_p(m) = e$, it follows that
		\[
			N \cdot \prod m_i \equiv 0 \mod p^{er},
		\]
		so $\prod \Ann_{\Z}(m_i \varmod{p^e})$ divides $N$ for all such $p$, which implies that $\prod \Ann_{\Z}(m_i \varmod n)$ divides $N$. In the other direction, it is straightforward to show that 
		\[
			\prod \Ann_{\Z}(m_i \varmod n) \cdot \exp(B)
		\]
		is integral, so that $N$ divides $\prod \Ann_{\Z}(m_i \varmod n)$.
	\end{proof}

	\section{The main result}
	\label{sec:mainresult}

	In this section, we prove our main result, phrased in terms of the coherent index of Definition~\ref{def:coherent_index}.

	\begin{theorem}
	\label{thm:key}
	    Let $X$ be a complex torus such that $\NS(X) = \Hdg^4(X) = 0$, with $\alpha \in \Br(X)$. Then
	    \[
	    	\ind_{\Coh}(\alpha) = \Ann(\alpha)
	    \]
	\end{theorem}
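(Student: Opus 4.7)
The upper bound $\ind_{\Coh}(\alpha) \mid \Ann(\alpha)$ is essentially free: if $f: X' \to X$ is a finite isogeny of degree $\Ann(\alpha)$ with $f^*\alpha = 0$ and $L$ is an $f^*\alpha$-twisted line bundle on $X'$, then $f_* L$ is an $\alpha$-twisted coherent sheaf of rank $\Ann(\alpha)$, exactly as in the proof of Lemma~\ref{lem:annihilator_index_bound}. The content of the theorem is the reverse divisibility $\Ann(\alpha) \mid \ind_{\Coh}(\alpha)$, which reduces to showing that the rank $r$ of any $\alpha$-twisted coherent sheaf $F$ is divisible by $\Ann(\alpha)$. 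By Lemma~\ref{lem:rational_b_field_annihilator}, it suffices to prove that $r \cdot \exp(B)$ is integral for any rational $B$-field $B$ representing $\alpha$.

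To achieve this I would transport the problem to a Severi--Brauer variety. Pick $\pi: \P \to X$ of class $\alpha$ and relative dimension $d$; by Lemma~\ref{lem:weight_one_vs_twisted}, $F$ corresponds to a weight-$1$ coherent sheaf $E$ on $\P$ of rank $r$. Because $\Ho^3(X, \Z)^{\tors} = 0$, the class $\alpha$ is topologically trivial, so Remarks~\ref{rem:top_triv} and~\ref{rem:b_field_lb} supply a topological weight-$1$ line bundle $\sco_{\P}^{\topo}(1)$ with $c_1(\sco_{\P}^{\topo}(1)) = H + \pi^*B$. The weight-$1$ condition, combined with the Leray--Hirsch decomposition of $\ktop_0(\P)$, forces
\[
[E] = \sum_{k = 0}^{d} \pi^*[V_k] \cdot [\sco_{\P}^{\topo}(1)]^{k} \in \ktop_0(\P),
\]
for virtual topological bundles $V_k$ on $X$ with $\rk V_1 = r$ and $\rk V_k = 0$ for $k \neq 1$. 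Expanding $\ch(E) = \sum_k \pi^*\ch(V_k) \cdot \exp(k(H + \pi^*B))$ and confronting it with the requirement that $\ch(E)$ be a Hodge class, I would extract relations degree by degree in the Leray--Hirsch decomposition $\Ho^*(\P, \Q) \simeq \Ho^*(X, \Q) \otimes \Q[H]/(\text{relation})$: under $\NS(X) = 0$, matching coefficients in $\ch_1(E)$ yields $c_1(V) = -rB$ with $V = \sum V_k$, so $rB$ is integral; under $\Hdg^4(X) = 0$, matching coefficients in $\ch_2(E)$ gives integrality of $rB^2$; and so on.

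The main obstacle is propagating this low-degree Hodge vanishing into integrality of $r \cdot \exp(B)$ in \emph{all} degrees, since the proof of Lemma~\ref{lem:rational_b_field_annihilator} may use integrality of $r \cdot B^s/s!$ for $s$ as large as the number of nonzero summands in a standard form of $B$, which can be $\dim X$. This is precisely the role of the Voisin-type argument alluded to in the introduction: the projective-bundle relation satisfied by $H$ on a topological untwisting of $\P$ expresses $H^{d+1}$ as a polynomial in lower powers of $H$ with coefficients pulled back from $X$, and iterating the Hodge constraint through this relation converts the degree-$\leq 4$ Hodge vanishing on $X$ into the integrality of $r \cdot \exp(B)$ in all degrees, in the same manner by which Voisin obstructs the resolution property on a general complex torus. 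Once $r \cdot \exp(B) \in \Ho^{\ev}(X, \Z)$ is established, Lemma~\ref{lem:rational_b_field_annihilator} identifies $\Ann(\alpha)$ as a divisor of $r$, completing the plan.
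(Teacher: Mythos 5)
Your outer scaffolding matches the paper: the upper bound via Lemma~\ref{lem:annihilator_index_bound}, the passage to a weight-$1$ sheaf $E$ on a Severi--Brauer variety via Lemma~\ref{lem:weight_one_vs_twisted}, the Leray--Hirsch/$B$-field bookkeeping of Remarks~\ref{rem:top_triv} and~\ref{rem:b_field_lb}, and the final appeal to Lemma~\ref{lem:rational_b_field_annihilator} are all present in the actual argument. But the step you flag as ``the main obstacle'' is the entire content of the theorem, and the mechanism you propose for it does not work. Knowing that the Chern character of $E$ (or of a virtual class $\xi$ with $\ch(E)=\ch(\sco_{\P}^{\topo}(1))\cdot\ch(\xi)$) is Hodge in each degree, together with $\NS(X)=\Hdg^4(X)=0$, only kills the degree-$2$ and degree-$4$ components of $\exp(B)\cdot\ch(\xi)$. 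No amount of iterating the projective-bundle relation $H^{d+1}=\dots$ recovers the higher degrees, because the higher Hodge constraints are simply absent: for $g=3$ one has $\Hdg^6(X)=\Ho^6(X,\Q)\neq 0$, so the degree-$6$ component of $r\cdot\exp(B)$, namely $rB^3/6$, is unconstrained by Hodge theory --- and Lemma~\ref{lem:rational_b_field_annihilator} genuinely needs integrality of $N\cdot B^s/s!$ up to $s=g$ (this is exactly where $\Ann(\alpha)=n^g$ comes from in Corollary~\ref{cor:failure}). So your ``and so on'' is where the proof fails, not where it gets routine.

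What the paper does instead is inject an analytic input that your proposal never invokes. One first reduces to the case where $E$ is reflexive with no subsheaves of smaller rank, hence $\mu$-stable for a K\"ahler form on $\P$. Pulling back along a finite isogeny $f:X'\to X$ killing $\alpha$, the sheaf $g^*E$ becomes $\mu$-polystable, untwists to a weight-$0$ sheaf, and descends to a reflexive $\mu$-polystable sheaf $E'_0$ on $X'$ with $c_1=c_2=0$ (this is where $\NS=\Hdg^4=0$ is used, and \emph{only} here). The theorem of Bando--Siu then says such a sheaf is a flat holomorphic bundle, so \emph{all} its Chern classes vanish; this is the Kobayashi--Hitchin/Bogomolov-inequality phenomenon underlying Voisin's resolution-property result, not a formal cohomological iteration. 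Flatness forces $\ch(E'_0)=N\cdot\mathbf{1}$, and comparing with $f^*(\exp(B)\cdot\ch(\xi))$ yields $\ch(\xi)=N\cdot\exp(-B)$ integral, at which point Lemma~\ref{lem:rational_b_field_annihilator} finishes. To repair your write-up you would need to add the stability reduction, the descent along the isogeny, and the Bando--Siu step; without them the argument does not close.
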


	Theorem~\ref{thm:key} implies Theorem~\ref{thm:main}, and also implies Corollary~\ref{cor:failure} by the formula given for $\Ann(\alpha)$ through Remark~\ref{rem:NS_zero} and Lemma~\ref{lem:annihilator_calculation}. We note that if a complex torus $X$ satisfies the condition that $\NS(X) = \Hdg^4(X) = 0$, and $E$ is a locally free coherent sheaf on $X$, then $c_i(E) = 0$ for each $i > 0$ \cite{voisin}. In particular, $X$ does not have the resolution property.

	\begin{proof}
	    First, from the definition of $\ind_{\Coh}(\alpha)$ and Lemma~\ref{lem:annihilator_index_bound},
	    \[
	    	\ind_{\Coh}(\alpha) \divides \ind(\alpha) \divides \Ann(\alpha),
	    \]
	    so it remains to show that $\Ann(\alpha)$ divides $\ind_{\Coh}(\alpha)$, i.e., that the rank of any $\alpha$-twisted coherent sheaf is divisible by $\Ann(\alpha)$. 
	    
	    Let $\P \to X$ be a Severi--Brauer variety of class $\alpha$, and let $E$ be a coherent sheaf of weight $1$ on $\P$. By Lemma~\ref{lem:weight_one_vs_twisted}, it is enough to show the rank of $E$ is divisible by $\Ann(\alpha)$. We may suppose that $E$ is torsion-free, and by induction on the rank, we may suppose that $E$ contains no nonzero subsheaves of smaller rank. Then $E^{\vee \vee}$ contains no nonzero subsheaves of smaller rank as well, and is reflexive. We replace $E$ with $E^{\vee \vee}$.

	    Let $f:X' \to X$ be a finite isogeny such that $f^* \alpha = 0$, and consider a pullback diagram
	    	\[
					\begin{tikzcd}
						\P' \ar[d] \ar[r, "g"] & \P \ar[d] \\
						X' \ar[r, "f"] & X.
					\end{tikzcd}
				\]
			We note that $E$ is $\mu$-stable with respect to a K\"ahler form $\omega$ on $\P$, so by \cite[Prop. 2.3]{finite_map}, $g^* E$ is $\mu$-polystable with respect to $g^* \omega$. Since $\P' \to X'$ is a projective bundle, there is a holomorphic relative hyperplane bundle $\sco_{\P'}(1)$ so that $E' = g^* E \otimes \sco_{\P'}(-1)$ has weight $0$, and in particular descends to a coherent sheaf $E'_0$ on $X'$ (Remark~\ref{rem:descent}). 

			Our assumption on $X$ implies that $\NS(X') = \Hdg^4(X') = 0$, so that $c_1(E') = c_2(E') = 0$. Since $E'$ is reflexive and $\mu$-polystable, the theorem of Bando and Siu \cite[Corollary 3]{bando_siu} implies that $E'$ is a flat holomorphic vector bundle, and in particular, $c_i(E') = 0$ for all $i > 0$.

			Next, we compute Chern characters. Let $\sco_{\P}^{\topo}(1)$ be a $1$-twisted topological line bundle on $\P$. From the Leray--Hirsch theorem (Remark~\ref{rem:top_triv}), we may write
			\[
				\ch(E) = \ch(\sco_{\P}^{\topo}(1)) \cdot \ch(\xi),
			\]
			where $\xi \in \ktop_0(X)$. From Remark~\ref{rem:b_field_lb}, we may write $c_1(\sco_{\P}^{\topo}(1)) = H + B$, where $H$ is a rational Hodge class and $B$ is a rational $B$-field for $\alpha$. Since $H$ is Hodge, 
			\[ 
				\zeta = \exp(B) \cdot \ch(\xi) \in \Ho^{\ev}(X, \Q)
			\]
			is Hodge in each degree. 

			We claim that $f^* \zeta = \ch(E'_0)$. To prove the claim, first observe that $f^* B$ is a rational $B$-field for $f^* \alpha = 0$, hence must be integral because $\NS(X') = 0$. Therefore, $g^* h$ is an integral Hodge class, and generates $\NS(\P')$. It follows that $c_1(\sco_{\P'}(1)) = g^* h$, which implies the claim.

			From the first part of the proof, 
			\[
				\ch(E'_0) = N \cdot \mathbf{1} \in \Ho^{\ev}(X', \Z),
			\]
			for an integer $N \geq 0$, where $\mathbf{1}$ is the multiplicative unit. It follows that $\zeta = N \cdot \mathbf{1}$. 

			It remains to show that $\Ann(\alpha)$ divides $N$. But 
			\[
				\ch(\xi) = N \cdot \exp(-B)
			\]
			lies in $\Ho^{\ev}(X, \Z)$, so we conclude by Lemma~\ref{lem:rational_b_field_annihilator}.
	\end{proof}

	\bibliography{mybib}
	\bibliographystyle{amsalpha}

\end{document}